\newtheorem{theorem}{Theorem}[section]
\newtheorem{definition}[theorem]{Definition}
\newtheorem{corollary}[theorem]{Corollary}
\newtheorem{lemma}[theorem]{Lemma} 
\newtheorem{remark}[theorem]{Remark}
\title{Fundamentals of Gaussian CM Sequences}
\author{
  Reza Rezaie and X. Rong Li \\
 Department of Electrical Engineering\\
 University of New Orleans\\
New Orleans, LA 70148 \\
  \texttt{rrezaie@uno.edu} and \texttt{xli@uno.edu} \\
}
\begin{document}

\maketitle

\begin{abstract}
Markov processes are widely used in modeling random phenomena/problems. However, they may not be adequate in some cases where more general processes are needed. The conditionally Markov (CM) process is a generalization of the Markov process based on conditioning. There are several classes of CM processes (one of them is the class of reciprocal processes), which provide more capability (than Markov) for modeling random phenomena. Reciprocal processes have been used in many different applications (e.g., image processing, intent inference, intelligent systems). In this paper, nonsingular Gaussian (NG) CM sequences are studied, characterized, and their dynamic models are presented. The presented results provide effective tools for studying reciprocal sequences from the CM viewpoint, which is different from that of the literature. Also, the presented models and characterizations serve as a basis for application of CM sequences, e.g., in motion trajectory modeling with destination information.
\end{abstract}

\textbf{Keywords:} Conditionally Markov (CM) sequence, Gaussian sequence, dynamic model, characterization.

\section{Introduction}

For modeling a random phenomenon/problem, usually the following order should be considered \cite{Li_2015}. First, if the phenomenon is time-invariant, a random variable might be good enough. Otherwise, a stochastic process seems necessary. An independent process can be considered first for simplicity. If such a simple process is not good enough, the next choice is usually a Markov process. The Markov process has two elements (i.e., an initial density and an evolution law). Even the Markov process is not good enough for some cases. Sometimes a higher order (e.g., second order) Markov process does not fit some phenomena well, for example, a time-varying phenomenon with some information available about its future (e.g., destination). More specifically, consider an example of trajectory prediction with destination information \cite{Fanas1}--\cite{DD_Conf}. Such a problem has three main elements: an origin, an evolution law, and a destination, for which the Markov process does not fit since it can not model information about the destination. In other words, the destination density of a Markov process is completely determined by its initial density and evolution law. One class of CM processes called $CM_L$ has the following main elements: a joint endpoint density and an evolution law (in other words, an initial density, an evolution law, and a destination density conditioned on the initial). This process can model destination information while it has a Markov-like evolution law, which is desirable for simplicity. Therefore, the $CM_L$ process is more suitable than the Markov process for problems with information about the destination. Generally speaking, CM processes, including the Markov process as a special case, provide a systematic approach and a wide variety of choices for modeling random phenomena. Conditioning is a very powerful concept/tool in probability theory. The notion of CM processes combines the conditioning concept and the Markov property. Different ways of combining the two lead to different classes of CM processes, which are more powerful than the Markov process for modeling random phenomena. In addition, as a special CM process, the reciprocal process has been used in many different areas of science and engineering, including stochastic mechanics, image processing, trajectory modeling and intent inference, intelligent systems, and acausal systems (e.g., \cite{Levy_1}--\cite{Krener1}). CM processes provide a fruitful viewpoint for studying the reciprocal process from the CM viewpoint \cite{CM_Part_II_A_Conf}. Therefore, it is desired to study, model, and characterize different classes of CM processes. 

Consider stochastic sequences defined over $[0,N]=\lbrace 0,1,\ldots,N \rbrace$. For convenience, let the index be time. A sequence is Markov if and only if (iff) conditioned on the state at any time $k$, the subsequences before and after $k$ are independent. A sequence is reciprocal iff conditioned on the states at any two times $k_1$ and $k_2$, the subsequences inside and outside the interval $[k_1,k_2]$ are independent. In other words, inside and outside are independent given the boundaries. A sequence is $CM_F$ ($CM_L$) iff conditioned on the state at time $0$ ($N$), the sequence is Markov over $[1,N]$ ($[0,N-1]$). The subscript ``$F$" (``$L$") is used because the conditioning is at the \textit{first} (\textit{last}) time of the interval. There are other classes that are CM over a subinterval $[k_1,k_2] \subset [0,N]$. But in this paper we do not consider them. The Markov sequence and the reciprocal sequence are two important classes of the CM sequence.

The notion of CM processes was introduced in \cite{Mehr} for Gaussian processes based on mean and covariance functions. Stationary Gaussian CM processes were studied and characterized, and construction of some non-stationary Gaussian CM processes was discussed. \cite{ABRAHAM} extended the definition of Gaussian CM processes (presented in \cite{Mehr}) to the general (Gaussian/non-Gaussian) case. Furthermore, \cite{ABRAHAM} and \cite{Carm} commented on the relation of Gaussian CM \cite{Mehr} and Gaussian reciprocal processes. Reciprocal processes were introduced in \cite{Bernstein}, and later studied in \cite{Slepian}--\cite{Moura}. A characterization and a dynamic model of NG reciprocal sequences, and a characterization of NG Markov sequences were presented in \cite{Levy_Dynamic} and \cite{Ackner}, respectively. The relationship between the (Gaussian/non-Gaussian) CM process and the (Gaussian/non-Gaussian) reciprocal process was studied in \cite{CM_Part_II_A_Conf}, where a dynamic model governing the NG reciprocal sequence was presneted from the CM viewpoint. 

From system theory, it is well known that the state concept is equivalent to the Markov property, that is, conditioned on the state at any time, the states before and after are independent. That is why there exists a recursive model for the evolution of a Markov sequence. However, for a general sequence there is no simple recursive model for evolution. The CM sequence is more general than the Markov sequence. Consequently, a CM sequence may not have the above concept of state, in general. Instead, it has a similar concept if it is conditioned on the states at two instead of one time. That is why a simple recursive model also exists for the evolution of Gaussian CM sequences. In this paper, we start from a formal definition of CM sequences and obtain a simpler yet equivalent description of CM sequences, particularly for the Gaussian case. Then, the corresponding dynamic models and characterizations are obtained.

The main contributions of this paper are as follows. (stationary/non-stationary) NG CM sequences are studied and their dynamic models and characterizations are presented. These models and characterizations make CM sequences easily applicable. The presented dynamic models (called $CM_L$/$CM_F$ models) are recursive. We prove that every $CM_L$ ($CM_F$) sequence obeys a $CM_L$ ($CM_F$) model and every sequence governed by a $CM_L$ ($CM_F$) model is a $CM_L$ ($CM_F$) sequence. In other words, the model is a complete description of the $CM_L$ ($CM_F$) sequence (the same is true for the characterizations). This paper provides useful tools for application of NG CM sequences in different problems, e.g., motion trajectory modeling with destination information. Also, it provides a foundation for studying and modeling reciprocal sequences from the CM viewpoint, which is a very fruitful angle and leads to easily applicable results.

In this paper, in Section \ref{Section_Definition}, definitions of two main classes of CM sequences ($CM_L$ and $CM_F$) are presented for the general (Gaussian/non-Gaussian) case. In Section \ref{Section_CML_Dynamic}, dynamic models of NG $CM_L$ and NG $CM_F$ sequences are presented. Characterizations of NG $CM_L$ and NG $CM_F$ sequences are given in Section \ref{Section_CML_Characterization}. Finally, Section \ref{Section_Summary} contains conclusions and discusses applications of the obtained results.

\section{Definitions and Preliminaries}\label{Section_Definition}

\subsection{Conventions}

We consider stochastic sequences defined over the interval $[0,N]$, which is a general discrete index interval, but for convenience it is called time. Also, we define  
\begin{align*}
[i,j]& \triangleq \lbrace i,i+1,\ldots ,j-1,j \rbrace \\
[x_k]_{i}^{j} & \triangleq \lbrace x_k, k \in [i,j] \rbrace \\
[x_k] & \triangleq [x_k]_{0}^{N}\\
[x_k]_{J} & \triangleq \lbrace x_k, k \in J \rbrace, J \subset [0,N]\\
i,j,k_1,k_2& \in [0,N], i<j \\
\sigma([x_k]_i^j)& \triangleq \sigma\text{-field generated by } [x_k]_i^j
\end{align*}
$C_{k_1,k_2}$ is a covariance function, and $C_k \triangleq C_{k,k}$. $C$ is the covariance matrix of the whole sequence $[x_k]$. Also, $0$ may denote a zero scalar, vector, or matrix, as is clear from the context. The symbol "$\setminus$" is used for set subtraction. $F(\cdot | \cdot)$ denotes a conditional cumulative distribution function (CDF).

We assume the stochastic sequences are defined with respect to an underlying probability triple $(\Omega,\mathcal{A},P)$. The abbreviations ZMNG and NG are used for ``zero-mean nonsingular Gaussian" and ``nonsingular Gaussian", respectively.

\subsection{CM Definitions and Notations}\label{Section_Definition_A}

\begin{definition}\label{CML_Like}
$[x_k]$ is $CM_c, c \in \lbrace 0,N \rbrace$, if for every $j \in [0,N]$,
\begin{align}\label{CML_Like_Eq}
P \lbrace AB|x_{j},x_{c} \rbrace =P \lbrace A|x_{j},x_{c}\rbrace P \lbrace B|x_{j},x_{c}\rbrace
\end{align}
where $A \in \sigma([x_k]_{j+1}^{N}\setminus \lbrace x_c \rbrace)$ \footnote{Note: $[x_k]_{j+1}^{N}\setminus \lbrace x_{N} \rbrace=[x_k]_{j+1}^{N-1}$ and $[x_k]_{j+1}^{N} \setminus \lbrace x_{0} \rbrace=[x_k]_{j+1}^{N}$.} and $B \in $ $ \sigma([x_k]_{0}^{j-1} \setminus \lbrace x_c \rbrace)$. 

\end{definition}

In other words, a sequence $[x_k]$ is $CM_c, c \in \lbrace 0,N \rbrace$, iff conditioned on the state at time $0$ ($N$), the sequence is Markov over $[1,N]$ ($[0,N-1]$). 

To build the foundation, we need a formal definition of CM sequences (Definition \ref{CML_Like}). However, to provide the results in a simple language for application, later we present Corollary \ref{CDF} below which is equivalent to Definition \ref{CML_Like}. 

\begin{remark}\label{R_CMN}
It is convenient to consider the following notation
\begin{align*}
CM_c=\left\{ \begin{array}{cc} 
CM_F & \text{if  } c=0\\
CM_L & \text{if  } c=N
\end{array} \right.
\end{align*}

\end{remark}

There are other CM classes. But in this paper we only consider $CM_L$ and $CM_F$.

\subsection{Preliminaries (For Gaussian CM Sequences)}\label{Section_Definition_B}

We present some equations which are equivalent to the above definitions of CM sequences, particularly in the Gaussian case. Due to space limitation, we skip some proofs.

\begin{lemma}\label{CML_Definition_Expectation}
$[x_k]$ is $CM_c, c \in \lbrace 0,N \rbrace$, iff for every Borel measurable function $f$
 \begin{align}\label{CML_Expectation_1}
 E[f(x_k)|[x_{i}]_{0}^{j},x_{c}]=E[f(x_k)|x_j,x_{c}]
 \end{align} 
for every $j,k \in [0,N], j<k$.

\end{lemma}

Then, equivalent to Lemma \ref{CML_Definition_Expectation}, we have the following corollary.
\begin{corollary}\label{CDF}
$[x_k]$ is $CM_c, c \in \lbrace 0,N \rbrace$, iff
 \begin{align}
F(\xi _k|[x_{i}]_{0}^{j},x_{c})=F(\xi _k|x_j,x_c)\label{CDF_1}
 \end{align} 
for every $j,k \in [0,N], j<k$, and every $\xi _k \in \mathbb{R}^d$, where $d$ is the dimension of $x_k$, and $F(\cdot|\cdot)$ is the conditional CDF.

\end{corollary}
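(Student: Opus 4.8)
The plan is to leverage Lemma \ref{CML_Definition_Expectation}, which already establishes that $[x_k]$ is $CM_c$ iff the conditional-expectation identity \eqref{CML_Expectation_1} holds for every Borel measurable $f$ and all $j<k$. Since being $CM_c$ has thus been characterized, it suffices to show that the conditional-CDF identity \eqref{CDF_1} is equivalent to the conditional-expectation identity \eqref{CML_Expectation_1}, and then chain the two equivalences. I would prove the two directions separately.

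For the direction \eqref{CML_Expectation_1} $\Rightarrow$ \eqref{CDF_1}, the argument is immediate: for a fixed $\xi_k \in \mathbb{R}^d$, take $f$ to be the indicator $f(x)=\mathbf{1}_{\{x \le \xi_k\}}$, with $\le$ understood componentwise; this $f$ is bounded and Borel measurable. Then $E[f(x_k)\,|\,\cdot]=P\{x_k \le \xi_k \,|\, \cdot\}=F(\xi_k|\cdot)$, so specializing \eqref{CML_Expectation_1} to this choice of $f$ yields exactly \eqref{CDF_1}. In other words, the CDF condition is just the expectation condition restricted to indicator test functions.

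For the converse \eqref{CDF_1} $\Rightarrow$ \eqref{CML_Expectation_1}, I would argue that the conditional CDF determines the regular conditional distribution of $x_k$. Assuming \eqref{CDF_1} holds for every $\xi_k$, the two regular conditional laws of $x_k$ --- one given $([x_i]_0^j,x_c)$ and one given $(x_j,x_c)$ --- have identical CDFs and are therefore the same probability measure (almost surely in the conditioning variables), by uniqueness of the measure determined by a CDF. Consequently $E[f(x_k)\,|\,[x_i]_0^j,x_c]=E[f(x_k)\,|\,x_j,x_c]$ for every Borel measurable $f$ (for which the expectation exists), which is \eqref{CML_Expectation_1}. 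This recovers the full expectation identity from the seemingly weaker CDF identity.

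The main obstacle is the measure-theoretic bookkeeping in the converse: the equality \eqref{CDF_1} is an almost-sure statement in the conditioning variables, yet it is asserted pointwise in $\xi_k$, so one must assemble a single common null set valid for all $\xi_k$ simultaneously. I would handle this by first restricting to $\xi_k$ with rational coordinates --- a countable set, so the union of the associated null sets remains null --- and then invoking right-continuity of the conditional CDF in $\xi_k$ to extend the equality from this dense set to all $\xi_k \in \mathbb{R}^d$ off the common null set. Once the two conditional CDFs coincide everywhere for almost every realization of the conditioning variables, equality of the conditional distributions follows, and the conclusion for conditional expectations is then obtained by the standard approximation machinery (indicators, then simple functions, then monotone and dominated convergence).
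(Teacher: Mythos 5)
Your proof is correct and follows exactly the route the paper intends: the paper states Corollary \ref{CDF} as ``equivalent to Lemma \ref{CML_Definition_Expectation}'' and omits the proof for space, and your argument---indicator test functions for one direction, and the standard null-set/right-continuity/$\pi$-system machinery to recover the full expectation identity from the CDF identity for the other---is precisely that equivalence, worked out in detail.
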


For the Gaussian sequence, the above results for the CM sequence are equivalent to the following.

\begin{lemma}\label{GaussianCML_Definition_Expectation}
A Gaussian $[x_k]$ is $CM_c, c \in \lbrace 0,N \rbrace$, iff
\begin{align}\label{GaussianCML_E_1}
 E[x_k|[x_{i}]_{0}^{j},x_{c}]=E[x_k|x_j,x_{c}]
\end{align} 
for every $j,k \in [0,N], j<k$. 

\end{lemma}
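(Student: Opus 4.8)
The plan is to leverage the equivalence in Corollary \ref{CDF} together with the defining feature of jointly Gaussian vectors: a conditional law is completely specified by its conditional mean and conditional covariance. The forward implication is essentially free. If $[x_k]$ is $CM_c$, then by Lemma \ref{CML_Definition_Expectation} the identity \eqref{CML_Expectation_1} holds for every Borel measurable $f$, and choosing $f$ to be each coordinate projection (i.e., $f(x_k)=x_k$ componentwise) yields \eqref{GaussianCML_E_1} at once.

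For the converse, I would start from \eqref{GaussianCML_E_1} and aim to verify \eqref{CDF_1}, which by Corollary \ref{CDF} is equivalent to the $CM_c$ property. Write $Y_1 \triangleq ([x_i]_0^j,x_c)$ and $Y_2 \triangleq (x_j,x_c)$. Since $[x_k]$ is jointly Gaussian, the conditional laws of $x_k$ given $Y_1$ and given $Y_2$ are both Gaussian, and each conditional CDF in \eqref{CDF_1} is therefore determined by the corresponding conditional mean and conditional covariance. The hypothesis \eqref{GaussianCML_E_1} already identifies the two conditional means as the same random variable, so it remains only to match the two conditional covariances.

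The key step---and the only one requiring the Gaussian structure in an essential way---is this covariance matching. Here I would invoke the standard fact that for jointly Gaussian variables the estimation error $e_m \triangleq x_k - E[x_k|Y_m]$ is zero-mean, independent of $Y_m$, and satisfies $\mathrm{Cov}(x_k|Y_m)=\mathrm{Cov}(e_m)$; in particular the conditional covariance is deterministic, i.e., it does not depend on the conditioning value. Because \eqref{GaussianCML_E_1} asserts $E[x_k|Y_1]=E[x_k|Y_2]$ as random variables, the two errors coincide, $e_1=e_2$ almost surely, whence $\mathrm{Cov}(x_k|Y_1)=\mathrm{Cov}(e_1)=\mathrm{Cov}(e_2)=\mathrm{Cov}(x_k|Y_2)$. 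With equal conditional means and equal conditional covariances, the two Gaussian conditional CDFs in \eqref{CDF_1} agree for every $\xi_k$, and the converse follows by Corollary \ref{CDF}.

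I expect the main obstacle to be stating the covariance-matching step rigorously rather than conceptually: one must justify that the conditional covariance genuinely equals the unconditional covariance of the estimation error (which rests on the error being independent of, not merely uncorrelated with, the conditioning vector---a property special to the Gaussian case), and that the almost-sure equality of the conditional means transfers to an almost-sure equality of the conditional CDFs over all $\xi_k$ simultaneously. Everything else is a direct application of the already-established equivalences in Lemma \ref{CML_Definition_Expectation} and Corollary \ref{CDF}.
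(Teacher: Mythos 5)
Your proposal is correct and follows essentially the same route as the paper's proof: necessity via Lemma \ref{CML_Definition_Expectation}, and sufficiency by using Gaussianity to upgrade orthogonality of the estimation error to independence, concluding that equal conditional means force equal (deterministic) conditional covariances and hence equal Gaussian conditional laws. The only cosmetic difference is that you close the argument through Corollary \ref{CDF} while the paper closes it through Lemma \ref{CML_Definition_Expectation}, which the paper states are equivalent.
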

\begin{proof}
Necessity: By Lemma \ref{CML_Definition_Expectation}, for a $CM_c$ sequence $[x_k]$, $\eqref{GaussianCML_E_1}$ holds.

Sufficiency: Let $[x_k]$ be a Gaussian sequence for which $\eqref{GaussianCML_E_1}$ holds. The conditional covariance can be calculated as
\begin{align*}
&\text{Cov}(x_k|[x_{i}]_{0}^{j},x_{c})=E\Big[\Big(x_k- E[x_k|[x_{i}]_{0}^{j},x_{c}]\Big)\Big(\cdot\Big)'\Big | [x_{i}]_{0}^{j},x_{c}\Big]
\end{align*}
On the other hand, for conditional expectation we have 
\begin{align*}
E[&(x_k-E[x_k|[x_{i}]_{0}^{j},x_{c}])g([x_{i}]_{0}^{j},x_{c})]=0
\end{align*}
for every Borel measurable function $g$. Thus, $x_k-E[x_k|[x_{i}]_{0}^{j},x_{c}]$ is orthogonal to (and due to Gaussianity independent of) $[x_{i}]_{0}^{j}$ and $x_{c}$. Therefore, noting $\eqref{GaussianCML_E_1}$, we have 
\begin{align}
\text{Cov}(x_k|[x_{i}]_{0}^{j},x_{c})&=E\Big[\Big(x_k- E[x_k|x_j,x_{c}]\Big)\Big(\cdot\Big)'\Big]\nonumber \\
&=E\Big[\Big(x_k- E[x_k|x_j,x_{c}]\Big)\Big(\cdot\Big)'|x_j,x_c\Big]\nonumber\\
&=\text{Cov}(x_k|x_j,x_{c})\label{GaussianCML_Cov}
\end{align}

Due to Gaussianity, $\eqref{GaussianCML_E_1}$ and $\eqref{GaussianCML_Cov}$ lead to the equality of the corresponding conditional density. In other words, the Gaussian conditional density is completely determined by its conditional expectation \cite{Doob}. Therefore, $\eqref{CML_Expectation_1}$ holds and the sequence $[x_k]$ is $CM_c$.
\end{proof}

\section{Dynamic Models of $CM_c$ Sequences}\label{Section_CML_Dynamic}

\subsection{Forward Model}\label{CMc_Forward}

A dynamic model for the ZMNG reciprocal sequence was presented in \ [33]. Inspired by it, a model for evolution of the ZMNG $CM_c$ sequence, called a $CM_c$ model, is presented next. Lemma \ref{CML_Dynamic_Forward_Necessity} demonstrates construction of a $CM_c$ model.

\begin{lemma}\label{CML_Dynamic_Forward_Necessity}
Let $[x_k]$ be a ZMNG $CM_c$ sequence with covariance function $C_{k_1,k_2}$. Then, it is governed by
\begin{align}
x_k&=G_{k,k-1}x_{k-1}+G_{k,c}x_c+e_k, \quad k \in [1,N] \setminus \lbrace c \rbrace
\label{CML_Dynamic_Forward}\\
x_c&=e_c, \quad x_0=G_{0,c}x_c+e_0 \, \, (\text{for} \, \, c=N) \label{CML_Forward_BC2}
\end{align}
where $[e_k]$ is a zero-mean white NG sequence with covariances $G_k$.

\end{lemma}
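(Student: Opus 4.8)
The plan is to construct the claimed model directly from the $CM_c$ characterization of Lemma \ref{GaussianCML_Definition_Expectation} together with the nonsingular Gaussian structure, proceeding by induction on $k$. The key observation is that \eqref{GaussianCML_E_1} says the best linear estimate (here conditional expectation, which for zero-mean Gaussian sequences is linear) of $x_k$ given all past states and the fixed endpoint $x_c$ collapses to a function of only $x_j$ and $x_c$ when $j=k-1$. Specifically, I would first set $j=k-1$ in \eqref{GaussianCML_E_1} to obtain $E[x_k|[x_i]_0^{k-1},x_c]=E[x_k|x_{k-1},x_c]$, and then invoke the well-known fact that for a ZMNG vector the conditional expectation $E[x_k|x_{k-1},x_c]$ is a \emph{linear} function of $x_{k-1}$ and $x_c$. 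This immediately yields coefficient matrices $G_{k,k-1}$ and $G_{k,c}$ with $E[x_k|x_{k-1},x_c]=G_{k,k-1}x_{k-1}+G_{k,c}x_c$, and I would then \emph{define} $e_k \triangleq x_k - E[x_k|[x_i]_0^{k-1},x_c]$, which by construction gives the model equation \eqref{CML_Dynamic_Forward}.

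Next I would verify that $[e_k]$ has the asserted properties. By the orthogonality principle for conditional expectation in the Gaussian case (exactly the argument used in the proof of Lemma \ref{GaussianCML_Definition_Expectation}), $e_k$ is orthogonal to, and hence by Gaussianity independent of, the entire conditioning set $\{[x_i]_0^{k-1},x_c\}$. Each $e_k$ is zero-mean Gaussian with some covariance $G_k \triangleq \text{Cov}(e_k)$. To establish \emph{whiteness}, I would show that $e_k$ is orthogonal to $e_\ell$ for $\ell < k$ (with $\ell,k \ne c$). This follows because each $e_\ell$ with $\ell<k$ is a linear (measurable) function of $\{[x_i]_0^{k-1},x_c\}$, to which $e_k$ is independent; one must handle the index $c$ carefully since the conditioning set always contains $x_c$, but the construction is arranged so that the noise indices run over $[1,N]\setminus\{c\}$.

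The boundary conditions \eqref{CML_Forward_BC2} require separate treatment of the endpoint $x_c$ and the opposite endpoint. For the case $c=N$ the recursion in \eqref{CML_Dynamic_Forward} runs over $k\in[1,N-1]$; at the initial time one sets $x_0$ in terms of $x_c=x_N$ by writing $x_0=E[x_0|x_c]+e_0=G_{0,c}x_c+e_0$, again using linearity of Gaussian conditional expectation and defining $e_0$ as the residual, which is independent of $x_c$. The term $x_c=e_c$ simply records that the conditioning state is itself treated as a zero-mean Gaussian ``input.'' For $c=0$ (the $CM_F$ case) the analogous initialization is degenerate since $x_c=x_0$ already anchors the recursion, so the extra equation for $x_0$ in \eqref{CML_Forward_BC2} is only needed when $c=N$, as indicated.

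The main obstacle I anticipate is the bookkeeping around the index $c$ rather than any deep analytic difficulty: one must be careful that the recursion skips $k=c$, that the conditioning set $\{[x_i]_0^{k-1},x_c\}$ is correctly interpreted when $c$ lies inside $[0,k-1]$ versus beyond $k$, and that whiteness of $[e_k]$ holds across the ``gap'' at $c$. The two genuinely nontrivial inputs are (i) the fact that \eqref{GaussianCML_E_1} with $j=k-1$ reduces the high-dimensional conditioning to just $(x_{k-1},x_c)$, which is precisely the $CM_c$ property, and (ii) the nonsingularity assumption, which guarantees that all the conditional covariances and hence the coefficient matrices $G_{k,k-1},G_{k,c},G_{0,c}$ are well-defined. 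Everything else reduces to the standard Gaussian fact, already exploited in the preceding lemma, that orthogonality implies independence.
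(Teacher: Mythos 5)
Your proposal is correct and takes essentially the same route as the paper's proof: invoke Lemma \ref{GaussianCML_Definition_Expectation} with $j=k-1$, define $e_k$ as the residual of the linear Gaussian conditional expectation $E[x_k|x_{k-1},x_c]$, obtain whiteness from the orthogonality (hence Gaussian independence) of $e_k$ to $\lbrace [x_i]_0^{k-1},x_c \rbrace$ together with the fact that each earlier $e_\ell$ is a linear function of that set, and treat the boundary conditions for $c=0$ and $c=N$ exactly as you describe. The one detail the paper makes explicit, which falls under your acknowledged ``bookkeeping'' caveat, is the degenerate case $c=0$, $k=1$ (where $x_{k-1}=x_c=x_0$, so the joint covariance of the conditioning pair is singular and the two-block inverse formula fails); the paper handles it by splitting the coefficient as $G_{1,0}\triangleq \frac{1}{2}C_{1,0}C_0^{-1}$.
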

\begin{proof}
We prove the following: (i) model construction, (ii) boundary conditions and the whiteness of $[e_k]$. Nonsingularity of $G_k, k \in [0,N]$ can be easily proved (we skip it).

(i) Model construction: 

Since $[x_k]$ is $CM_c$, by Lemma \ref{GaussianCML_Definition_Expectation} for every $k \in [1,N] \setminus \lbrace c \rbrace$ we have
\begin{align}
E[x_k| [x_i]_0^{k-1},x_c]&=E[x_k|x_{k-1},x_c]\label{CML_p}
\end{align}
Since $[x_k]$ is Gaussian, for $c=0$ and $k=1$ we have $E[x_k|x_{k-1},x_c]=C_{1,0}C_0^{-1}x_0$. Let $G_{1,0} \triangleq \frac{1}{2} C_{1,0}C_0^{-1}$. For other $c$ and $k$ values (i.e., $c=0$ and $k \in [2,N]$, and $c=N$ and $k \in [1,N-1]$),
 \begin{align*}
E[x_k|x_{k-1},x_c]=[C_{k,k-1} \quad C_{k,c}] \left[
\begin{array}{cc}
C_{k-1} & C_{k-1,c}\\
C_{c,k-1} & C_{c}
\end{array} \right]^{-1}\left[\begin{array}{c}
x_{k-1}\\
x_c
\end{array}\right]
\end{align*}
Let
\begin{align*}
&[G_{k,k-1} \quad G_{k,c}] \triangleq [C_{k,k-1} \quad C_{k,c}] \left[
\begin{array}{cc}
C_{k-1} & C_{k-1,c}\\
C_{c,k-1} & C_{c}
\end{array} \right]^{-1}
\end{align*}
So, for every $k \in [1,N] \setminus \lbrace c \rbrace$ and $c \in \lbrace 0,N \rbrace$,
\begin{align}
E[x_k|x_{k-1},x_c]=G_{k,k-1}x_{k-1}+G_{k,c}x_c\label{CML_p2}
\end{align}
Define $e_k$, $k \in [1,N] \setminus \lbrace c \rbrace$, as
\begin{align}\label{eLk}
e_k&=x_k-E[x_k|x_{k-1},x_c]\\
&=x_k-G_{k,k-1}x_{k-1}-G_{k,c}x_c\nonumber
\end{align}
Then, for $c=0$ and $k=1$, $G_1 \triangleq \text{Cov}(e_1)=C_1-C_{1,0} C_0^{-1}C_{1,0}'$. For other $c$ and $k$ values,
\begin{align*}
G_k \triangleq \text{Cov}(e_k)= C_{k}-[C_{k,k-1} \quad C_{k,c}]\left[
\begin{array}{cc}
C_{k-1} & C_{k-1,c}\\
C_{c,k-1} & C_{c}
\end{array} \right]^{-1}
[C_{k,k-1} \quad C_{k,c}]'
\end{align*}
$[e_k]_{[1,N]\setminus \lbrace c \rbrace}$ is a zero-mean white Gaussian sequence uncorrelated with $x_0$ and $x_{c}$. It can be verified as follows. By the definition of conditional expectation and based on $\eqref{CML_p}$ we have
\begin{align}\label{E_CML_e}
E[(x_k-E[x_k|&x_{k-1},x_c])g([x_j]_0^{k-1},x_c)]=\nonumber\\&E[(x_k-E[x_k|[x_i]_{0}^{k-1},x_c])g([x_j]_0^{k-1},x_c)]=0
\end{align}
for every Borel measurable function $g$. Thus, by $\eqref{eLk}$ and $\eqref{E_CML_e}$, $e_k$ is uncorrelated with $[x_i]_0^{k-1}$ and $x_c$. Then, for $k \geq j$,
\begin{align}
E[e_ke_j']&=E[e_k(x_j-G_{j,j-1}x_{j-1}-G_{j,c}x_c)']=\left\{
\begin{array}{cc}
G_k & k=j\\
0 & \text{otherwise}
\end{array}\right.
\end{align}
Likewise for $j \geq k$. Therefore, we have 
\begin{align*}
E[e_ke_j']=\left\{
\begin{array}{cc}
G_{k} & k=j\\
0 & k \neq j
\end{array}\right.
\end{align*}
So, $[e_k]_{[1,N] \setminus \lbrace c \rbrace}$ is white.

(ii) Boundary conditions: 

For $c=0$, we have $G_0\triangleq C_0$. Let $c=N$. 
Since $x_0$ and $x_N$ are jointly Gaussian, we have $E[x_0|x_N]=G_{0,N}x_N$, where $G_{0,N}=C_{0,N}C_N^{-1}$. Then, we define $e_0 \triangleq x_0-G_{0,N}x_N$, where $e_0$ is a ZMNG vector with covariance $G_0=C_{0}-C_{0,N}C_{N}^{-1}C_{0,N}'$. Also, by the definition of conditional expectation, $e_0$ is uncorrelated with $x_N$ (because $E[(x_0-E[x_0|x_N])g(x_N)]=0$ for every Borel measurable function $g$). Also, for notational unification $e_N \triangleq x_N$ with covariance $G_N \triangleq C_N$. By $\eqref{E_CML_e}$, $[e_k]$ is white.  
\end{proof}

It is important that a dynamic model gives a unique covariance function of the corresponding sequence \cite{Levy_Dynamic}. This is the case for model $\eqref{CML_Dynamic_Forward}$--$\eqref{CML_Forward_BC2}$.

\begin{lemma}\label{CML_Forward_Well_Definedness}
\label{well-definedness}
Model $\eqref{CML_Dynamic_Forward}$--$\eqref{CML_Forward_BC2}$ for every parameter value admits a unique covariance function.

\end{lemma}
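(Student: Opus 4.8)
The plan is to show that the model writes each $x_k$ as a fixed linear combination of the driving white-noise vectors $e_0,\ldots,e_N$, so that $C_{k_1,k_2}=E[x_{k_1}x_{k_2}']$ is forced to equal one specific expression in the parameters $\{G_{k,k-1},G_{k,c}\}$ and the noise covariances $G_k$. Uniqueness of the covariance function then follows because this expression leaves no freedom.

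First I would fix the order in which the equations can be solved. For $c=0$ the boundary condition gives $x_0=e_0$, after which \eqref{CML_Dynamic_Forward} is a genuine forward recursion: if $x_0,\ldots,x_{k-1}$ are already expressed in the noise, then $x_k=G_{k,k-1}x_{k-1}+G_{k,0}x_0+e_k$ expresses $x_k$ in the noise as well. For $c=N$ the subtlety is that $x_c=x_N$ appears in every equation, so the system is not a priori one-directional; the resolution is that the boundary equation $x_N=e_N$ pins down $x_N$ independently of the recursion, and then $x_0=G_{0,N}x_N+e_0$ together with $x_k=G_{k,k-1}x_{k-1}+G_{k,N}x_N+e_k$ becomes forward in $k$ once again. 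In both cases a straightforward induction on $k$ produces matrices $L_{k,j}$, depending only on the parameters, with $x_k=\sum_j L_{k,j}e_j$.

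Second I would invoke the whiteness of $[e_k]$: since $E[e_ie_j']=0$ for $i\neq j$ and $E[e_ke_k']=G_k$, the representation $x_k=\sum_j L_{k,j}e_j$ gives $C_{k_1,k_2}=\sum_j L_{k_1,j}G_j L_{k_2,j}'$, in which every factor is determined by the parameters alone. Equivalently, and more transparently, I would stack the equations into $AX=E$ with $X=(x_0',\ldots,x_N')'$ and $E=(e_0',\ldots,e_N')'$, where $A$ has identity diagonal blocks, subdiagonal blocks $-G_{k,k-1}$, and blocks $-G_{k,c}$ in the column indexed by $c$. The content of the previous step is exactly that $A$ is invertible: for $c=0$ it is block lower triangular, and for $c=N$ expanding along the row carrying $x_N=e_N$ deletes the offending column and leaves a block lower triangular matrix, so $\det A=1$ in either case. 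Then $X=A^{-1}E$ and $C=\mathrm{Cov}(X)=A^{-1}\,\mathrm{diag}(G_0,\ldots,G_N)\,(A^{-1})'$ is single-valued, which is precisely the asserted uniqueness.

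The main obstacle is structural rather than computational: the term $G_{k,c}x_c$ couples every equation to the boundary state $x_c$, so one cannot naively solve the recursion forward. The key observation that unblocks the argument is that the boundary equation $x_c=e_c$ decouples $x_c$ from the recursion and restores the triangular structure; once this ordering is identified, the remainder is routine linear algebra and bookkeeping with the white-noise covariances.
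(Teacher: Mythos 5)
Your proof is correct. Note that the paper does not actually print a proof of this lemma (the authors state earlier that some proofs are skipped for space), so there is no official argument to compare against line by line; but yours is the natural one and evidently the intended one. The content of the lemma is that, in contrast with two-point boundary-value models such as the reciprocal model of \cite{Levy_Dynamic} (where the driving noise is colored and well-posedness is a genuine issue), the $CM_c$ model $\eqref{CML_Dynamic_Forward}$--$\eqref{CML_Forward_BC2}$ is causal once the equations are used in the right order: $x_c=e_c$ first, then (for $c=N$) $x_0=G_{0,N}x_N+e_0$, then the recursion forward in $k$. Your formulation of this as invertibility of the stacked system $AX=E$ --- with $A$ becoming block lower triangular with unit diagonal blocks once the row carrying $x_N=e_N$ is used to clear its column --- is exactly the right mechanism, and the conclusion $C=A^{-1}\,\mathrm{diag}(G_0,\ldots,G_N)\,(A^{-1})'$ gives uniqueness of the whole covariance matrix, hence of the covariance function. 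One cosmetic remark: for $c=0$ and $k=1$ the terms $G_{k,k-1}x_{k-1}$ and $G_{k,c}x_c$ both involve $x_0$ (this is why the paper's construction inserts a factor $\tfrac{1}{2}$ in defining $G_{1,0}$), so the $(1,0)$ block of your $A$ is $-(G_{1,0}+G_{1,0})$ rather than $-G_{1,0}$; this changes nothing in your argument, since all that matters is that every block of $A$ is a fixed function of the model parameters and sits on or below the diagonal after the clearing step.
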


\begin{lemma}\label{CML_Forward_Nonsingularity}
$[x_k]$ governed by $\eqref{CML_Dynamic_Forward}$--$\eqref{CML_Forward_BC2}$ is always nonsingular (for every parameter value).

\end{lemma}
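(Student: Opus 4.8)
The plan is to show that the linear map from the driving noise $[e_k]$ to the sequence $[x_k]$ is invertible, so that the covariance of $[x_k]$ is a congruence transform of the (nonsingular) block-diagonal covariance of $[e_k]$ and is therefore nonsingular. Concretely, I would stack the variables into $X=[x_0',\ldots,x_N']'$ and $E=[e_0',\ldots,e_N']'$ and read the model equations $\eqref{CML_Dynamic_Forward}$--$\eqref{CML_Forward_BC2}$ directly as a linear system $E=\mathcal{F}X$, where each block row expresses one $e_k$: the defining relations give $e_c=x_c$, $e_k=x_k-G_{k,k-1}x_{k-1}-G_{k,c}x_c$ for $k\in[1,N]\setminus\{c\}$, and (only when $c=N$) $e_0=x_0-G_{0,N}x_N$.

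The key observation is that $\mathcal{F}$ can be made block lower triangular with identity diagonal blocks by a suitable ordering of the components, so that $\det\mathcal{F}=1$ for every parameter value. I would order the blocks by placing $x_c$ first and then the remaining $x_k$ in increasing order of $k$ (and order $E$ identically). Under this ordering the block row for $e_c$ has the identity in its leading diagonal position; the block row for each $e_k$ has the identity in its diagonal position, while its only other nonzero blocks $-G_{k,k-1}$ and $-G_{k,c}$ (and $-G_{0,N}$ in the $c=N$ boundary row) multiply variables that precede $x_k$ in the chosen order, since $x_c$ is first and $x_{k-1}$ precedes $x_k$. Even in the degenerate case $c=0,\,k=1$, where $x_{k-1}$ and $x_c$ coincide, the coefficient of $x_k$ in $e_k$ is still the identity and the remaining term multiplies the earlier variable $x_0$. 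Hence $\mathcal{F}$ is block lower triangular with identity diagonal, so it is invertible with $\det\mathcal{F}=1$, independently of the parameters $G_{k,k-1},G_{k,c},G_{0,N}$.

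It then follows that $X=\mathcal{F}^{-1}E$. Since $[e_k]$ is zero-mean white with covariances $G_k$, the covariance of $E$ is the block-diagonal matrix $D=\mathrm{diag}(G_0,\ldots,G_N)$, which is nonsingular because each $G_k$ is nonsingular (the $e_k$ are NG). Therefore
\begin{align*}
C=\mathrm{Cov}(X)=\mathcal{F}^{-1}D(\mathcal{F}^{-1})'
\end{align*}
is a congruence transform of a nonsingular (indeed positive definite) matrix by the invertible matrix $\mathcal{F}^{-1}$, so $C$ is nonsingular. As this holds for every choice of parameters, the claim follows.

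I expect the main obstacle to be arguing triangularity uniformly in the two cases $c=0$ and $c=N$: in the natural index order the $c=N$ model is \emph{not} triangular, because $x_N=x_c$ appears in many block rows, so the crux is choosing the ordering (with $x_c$ placed first) that restores lower triangularity in both cases, together with handling the coincidence $x_{k-1}=x_c$ at $c=0,\,k=1$. Once the ordering is fixed, the determinant computation and the congruence conclusion are routine.
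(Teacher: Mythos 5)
Your proof is correct. Note that the paper itself omits the proof of this lemma (it is one of the proofs skipped ``due to space limitation''), so there is no in-paper argument to compare against; but your route is the natural and standard one: reading $\eqref{CML_Dynamic_Forward}$--$\eqref{CML_Forward_BC2}$ as $E=\mathcal{F}X$ with $\mathcal{F}$ block unit lower triangular after placing $x_c$ first (needed only for $c=N$, since for $c=0$ the natural order already works), then concluding $C=\mathcal{F}^{-1}D(\mathcal{F}^{-1})'$ is positive definite because $D=\mathrm{diag}(G_0,\ldots,G_N)$ is, the $G_k$ being nonsingular by the model's assumption that $[e_k]$ is white NG. You also correctly flagged and disposed of the one degenerate point, $c=0$, $k=1$, where $x_{k-1}=x_c$: the diagonal block is still the identity and the merged coefficient on $x_0$ is irrelevant to triangularity, so the determinant argument goes through uniformly in the parameters.
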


By the above lemmas, a model for the ZMNG $CM_c$ sequence was constructed and some related properties were studied. Now, we present the main result for the $CM_c$ model.

\begin{theorem}\label{CML_Dynamic_Forward_Proposition}
A ZMNG sequence $[x_k]$ with covariance function $C_{k_1,k_2}$ is $CM_c$ iff it obeys $\eqref{CML_Dynamic_Forward}$--$\eqref{CML_Forward_BC2}$.
\end{theorem}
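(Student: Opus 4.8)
The statement is an equivalence, so the proof splits into necessity and sufficiency; the plan is to observe that necessity is already in hand and to concentrate on sufficiency. For necessity, if $[x_k]$ is a ZMNG $CM_c$ sequence with covariance $C_{k_1,k_2}$, then Lemma \ref{CML_Dynamic_Forward_Necessity} constructs exactly the parameters $G_{k,k-1}, G_{k,c}, G_k$ (and the boundary terms) for which $[x_k]$ obeys $\eqref{CML_Dynamic_Forward}$--$\eqref{CML_Forward_BC2}$, with $[e_k]$ zero-mean white NG; so this direction is immediate. All the work is in the converse.

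For sufficiency, suppose $[x_k]$ is governed by $\eqref{CML_Dynamic_Forward}$--$\eqref{CML_Forward_BC2}$. First I would check it is ZMNG so that the characterization of Lemma \ref{GaussianCML_Definition_Expectation} applies: it is zero-mean Gaussian because every $x_k$ is a linear combination of the jointly Gaussian zero-mean noises $[e_k]$, and it is nonsingular by Lemma \ref{CML_Forward_Nonsingularity}. It then suffices to verify $\eqref{GaussianCML_E_1}$, i.e.\ $E[x_k|[x_i]_0^{j},x_c]=E[x_k|x_j,x_c]$ for all $j<k$. The case $k=c$ is trivial since both sides equal $x_c$, so take $k\neq c$.

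The first substantive step is to unroll the recursion (handling $c=N$ and $c=0$ separately, using the relevant boundary equation of $\eqref{CML_Forward_BC2}$) to see that each $x_i$ with $i\le k-1$, together with $x_c$, is a linear combination of $\{e_0,\dots,e_{k-1}\}\cup\{e_c\}$. Since $[e_k]$ is white and $k\neq c$, the noise $e_k$ is uncorrelated with, hence by joint Gaussianity independent of, $[x_i]_0^{k-1}$ and $x_c$. Substituting $x_k=G_{k,k-1}x_{k-1}+G_{k,c}x_c+e_k$ and using $E[e_k|[x_i]_0^{k-1},x_c]=0$ yields the one-step identity $E[x_k|[x_i]_0^{k-1},x_c]=G_{k,k-1}x_{k-1}+G_{k,c}x_c$, which is a function of $(x_{k-1},x_c)$ only.

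The remaining and crux step is to pass from $j=k-1$ to arbitrary $j<k$, which I would do by induction on $k-j$. For the inductive step, the tower property applied with $\sigma([x_i]_0^{j},x_c)\subseteq\sigma([x_i]_0^{k-1},x_c)$ gives $E[x_k|[x_i]_0^{j},x_c]=G_{k,k-1}E[x_{k-1}|[x_i]_0^{j},x_c]+G_{k,c}x_c$, and the induction hypothesis renders $E[x_{k-1}|[x_i]_0^{j},x_c]$ a function of $(x_j,x_c)$; hence $E[x_k|[x_i]_0^{j},x_c]$ is a function of $(x_j,x_c)$ as well. Finally, this conditional expectation is the projection of $x_k$ onto $\sigma([x_i]_0^{j},x_c)$ yet is measurable with respect to the smaller field $\sigma(x_j,x_c)$, so one more application of the tower property collapses it to $E[x_k|x_j,x_c]$, establishing $\eqref{GaussianCML_E_1}$; Lemma \ref{GaussianCML_Definition_Expectation} then gives that $[x_k]$ is $CM_c$. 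I expect the main obstacle to be precisely this extension from the one-step identity to all $j$ (the induction together with the measurability/tower-property collapse), and the bookkeeping needed to make the innovation-independence argument uniform across the $c=0$ and $c=N$ boundary structures.
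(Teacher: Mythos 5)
Your proposal is correct and follows essentially the same route as the paper: necessity via Lemma \ref{CML_Dynamic_Forward_Necessity}, nonsingularity via Lemma \ref{CML_Forward_Nonsingularity}, and sufficiency by verifying $\eqref{GaussianCML_E_1}$ through Lemma \ref{GaussianCML_Definition_Expectation}, with whiteness of $[e_k]$ making the innovations independent of $[x_i]_0^j$ and $x_c$. The only difference is presentational: the paper unrolls the recursion in one shot, writing $x_k=G_{k,j}x_j+G_{k,c|j}x_c+e_{k|j}$ with $e_{k|j}$ a combination of $[e_l]_{j+1}^k$ uncorrelated with the conditioning variables, whereas you reach the same conclusion by induction on $k-j$ with the tower property.
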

\begin{proof}
Theorem \ref{CML_Dynamic_Forward_Proposition} is proved based on Lemma \ref{GaussianCML_Definition_Expectation}. The necessity was proved in Lemma \ref{CML_Dynamic_Forward_Necessity}. So, we just need to prove the sufficiency. This amounts to prove $[x_k]$ is (i) nonsingular and (ii) Gaussian $CM_c$. Lemma \ref{CML_Forward_Nonsingularity} has established (i). So, we just need to prove (ii). Since $[x_k]$ is Gaussian, by Lemma \ref{GaussianCML_Definition_Expectation} $[x_k]$ is $CM_c$ if $E[x_k|[x_i]_0^j,x_c]=E[x_k|x_j,x_c]$ for every $j,k \in [0,N] \setminus \lbrace c \rbrace, j < k$. From $\eqref{CML_Dynamic_Forward}$ we have $x_k=G_{k,j}x_j+G_{k,c|j}x_c+e_{k|j}$, where the matrices $G_{k,j}$ and $G_{k,c|j}$ can be obtained from parameters of $\eqref{CML_Dynamic_Forward}$, and $e_{k|j}$ is a linear combination of $[e_l]_{j+1}^k$. Since $[e_k]$ is white, $[e_l]_{j+1}^k$ (and so $e_{k|j}$) is uncorrelated with $[x_k]_0^j$ and $x_c$. Thus, we have $E[x_k|[x_i]_0^j,x_c]=E[x_k|x_j,x_c]$, meaning that $[x_k]$ is $CM_c$.
\end{proof}

\section{Characterization of $CM_c$ Sequences}\label{Section_CML_Characterization}

\begin{definition}\label{CMc_Matrix}
A symmetric positive definite matrix is called $CM_L$ if it has form $\eqref{CML}$ and $CM_F$ if it has form $\eqref{CMF}$:
\begin{align}
&\left[
\begin{array}{ccccccc}
A_0 & B_0 & 0 & \cdots & 0 & 0 & D_0\\
B_0' & A_1 & B_1 & 0 & \cdots & 0 & D_1\\
0 & B_1' & A_2 & B_2 & \cdots & 0 & D_2\\
\vdots & \vdots & \vdots & \vdots & \vdots & \vdots & \vdots\\
0 & \cdots & 0 & B_{N-3}' & A_{N-2}  & B_{N-2} & D_{N-2}\\
0 & \cdots & 0 & 0 & B_{N-2}' & A_{N-1} & B_{N-1}\\
D_0' & D_1' & D_2' & \cdots & D_{N-2}' & B_{N-1}' & A_N
\end{array}\right]\label{CML}\\
&\left[
\begin{array}{ccccccc}
A_0 & B_0 & D_2 & \cdots & D_{N-2} & D_{N-1} & D_{N}\\
B_0' & A_1 & B_1 & 0 & \cdots & 0 & 0\\
D_2' & B_1' & A_2 & B_2 & \cdots & 0 & 0\\
\vdots & \vdots & \vdots & \vdots & \vdots & \vdots & \vdots\\
D_{N-2}' & \cdots & 0 & B_{N-3}' & A_{N-2}  & B_{N-2} & 0\\
D_{N-1}' & \cdots & 0 & 0 & B_{N-2}' & A_{N-1} & B_{N-1}\\
D_{N}' & 0 & 0 & \cdots & 0 & B_{N-1}' & A_N
\end{array}\right]\label{CMF}
\end{align}

\end{definition}

To refer to both $CM_L$ and $CM_F$ matrices we call them $CM_c$. A $CM_c$ matrix for $c=N$ is $CM_L$ and for $c=0$ is $CM_F$. The following theorem presents a characterization of the NG $CM_c$ sequence. It can be proved based on the $CM_c$ dynamic model of Theorem \ref{CML_Dynamic_Forward_Proposition}. 

\begin{theorem}\label{CML_Characterization}
A NG sequence with covariance matrix $C$ is $CM_c$ iff $C^{-1}$ has the $CM_c$ form.

\end{theorem}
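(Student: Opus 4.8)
The goal is to prove Theorem \ref{CML_Characterization}: a NG sequence with covariance matrix $C$ is $CM_c$ iff $C^{-1}$ has the $CM_c$ form of Definition \ref{CMc_Matrix}. The plan is to leverage the dynamic model of Theorem \ref{CML_Dynamic_Forward_Proposition}, which already gives an equivalence between the $CM_c$ property and the recursion \eqref{CML_Dynamic_Forward}--\eqref{CML_Forward_BC2}. The key observation is that the inverse covariance matrix $C^{-1}$ can be read off directly from the model by writing the whole recursion in stacked matrix form. Concretely, I would write the system \eqref{CML_Dynamic_Forward}--\eqref{CML_Forward_BC2} as $[e_k] = \Gamma [x_k]$, where $\Gamma$ is a matrix encoding the coefficients $G_{k,k-1}$, $G_{k,c}$ (and the boundary terms). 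Since $[e_k]$ is zero-mean white NG with $\text{Cov}(e_k)=G_k$, the covariance of the stacked noise vector is a block-diagonal matrix $\mathcal{G}=\text{diag}(G_0,\ldots,G_N)$. Then $[x_k]=\Gamma^{-1}[e_k]$ gives $C=\Gamma^{-1}\mathcal{G}(\Gamma^{-1})'$, hence $C^{-1}=\Gamma'\mathcal{G}^{-1}\Gamma$.

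For the necessity direction, I would compute $C^{-1}=\Gamma'\mathcal{G}^{-1}\Gamma$ explicitly and verify its sparsity pattern matches \eqref{CML}/\eqref{CMF}. The matrix $\Gamma$ for the $CM_L$ case ($c=N$) is lower bidiagonal in the $x_{k-1}\to x_k$ coupling, plus an extra column coupling every row to $x_c=x_N$; this is exactly the structure that, after forming $\Gamma'\mathcal{G}^{-1}\Gamma$, produces the tridiagonal-plus-last-row-and-column arrowhead pattern of \eqref{CML}. The only nonzero off-tridiagonal blocks in $C^{-1}$ are those in the last row/column (the $D_i$ blocks), because the $x_c$ term couples each equation only to the single fixed time $c$. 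For $c=0$ the same argument gives the arrowhead anchored at the first row/column, matching \eqref{CMF}. I would state the block-wise identities ($A_k$, $B_k$, $D_k$ in terms of the $G_{k,k-1}$, $G_{k,c}$, $G_k^{-1}$) schematically rather than grinding through every entry, noting that each entry of $C^{-1}$ outside the prescribed pattern is a sum of products that vanishes by the bidiagonal structure of $\Gamma$.

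For the sufficiency direction, I would argue in reverse: given that $C^{-1}$ has the $CM_c$ form, I want to exhibit a factorization $C^{-1}=\Gamma'\mathcal{G}^{-1}\Gamma$ with $\Gamma$ of the required bidiagonal-plus-one-column shape and $\mathcal{G}$ block diagonal and positive definite. Since $C^{-1}$ is symmetric positive definite with the given sparsity, one can perform a (block) $UDU'$ or Cholesky-type factorization respecting the band-plus-arrow structure; the band structure guarantees the resulting triangular factor $\Gamma$ has exactly the bidiagonal-plus-$c$-column pattern, and positive definiteness guarantees the diagonal blocks $G_k=\mathcal{G}_{kk}$ are nonsingular. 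This recovers a model of the form \eqref{CML_Dynamic_Forward}--\eqref{CML_Forward_BC2}, and Theorem \ref{CML_Dynamic_Forward_Proposition} then certifies that $[x_k]$ is $CM_c$. Alternatively, and perhaps more cleanly, I would use Lemma \ref{CML_Forward_Well_Definedness}: the factorization defines model parameters, the model admits a unique covariance, and that covariance must coincide with $C$ since the factorization reproduces $C^{-1}$.

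The main obstacle I anticipate is the sufficiency direction, specifically verifying that the prescribed sparsity pattern of $C^{-1}$ forces the triangular factor $\Gamma$ to have precisely the right shape — that the arrowhead/banded structure is preserved under the factorization and does not generate spurious fill-in outside the allowed positions. This requires a careful argument about how the zero blocks of $C^{-1}$ constrain the factor, essentially a structured-elimination / graphical-sparsity argument (the interaction graph of \eqref{CML}/\eqref{CMF} is a path with one additional universal vertex, whose elimination order introduces no fill). The necessity direction is comparatively mechanical once the stacked form $C^{-1}=\Gamma'\mathcal{G}^{-1}\Gamma$ is set up, so I would expect to spend most of the rigor budget confirming the structure is both necessary and sufficient for the factorization to close the loop with the dynamic model.
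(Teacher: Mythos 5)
Your proposal is correct and follows exactly the route the paper itself indicates: the paper offers no detailed proof but states that Theorem \ref{CML_Characterization} ``can be proved based on the $CM_c$ dynamic model of Theorem \ref{CML_Dynamic_Forward_Proposition},'' which is precisely your factorization $C^{-1}=\Gamma'\mathcal{G}^{-1}\Gamma$ in one direction and the structured (no fill-in) block elimination recovering the model in the other. Your closing observation that the sparsity pattern is a path plus a universal vertex also matches the paper's alternative remark that the result can be verified via Gaussian conditional-independence/graphical-model arguments.
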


Theorem \ref{CML_Characterization} can be also verified based on the relationship between the covariance matrix and conditional independence between some Gaussian variables \cite{Graphical}.

A characterization of the NG reciprocal sequence is as follows \cite{Levy_Dynamic}.

\begin{theorem}\label{Reciprocal_Char}
A NG sequence with covariance matrix $C$ is reciprocal iff $C^{-1}$ is cyclic tri-diagonal (i.e., $\eqref{CML}$ with $D_1=\cdots=D_{N-2}=0$). 

\end{theorem}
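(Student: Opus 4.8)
The plan is to derive Theorem \ref{Reciprocal_Char} as a specialization of Theorem \ref{CML_Characterization}, exploiting the fact that a reciprocal sequence is simultaneously $CM_F$ and $CM_L$. The key observation is that reciprocity is a two-sided conditioning property (conditioned on the boundary states $x_{k_1}, x_{k_2}$, the inside and outside are independent), whereas $CM_c$ conditions on only one endpoint. So I expect reciprocal to be strictly stronger than each of $CM_F$ and $CM_L$ individually, and in fact to coincide with their intersection.

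First I would establish that a NG sequence is reciprocal iff it is both $CM_F$ and $CM_L$. The forward direction is immediate from the definitions: taking $k_1 = 0$ in the reciprocal property yields the $CM_F$ property (conditioning on $x_0$ makes the sequence Markov over $[1,N]$), and taking $k_2 = N$ yields $CM_L$. For the converse, I would argue that being Markov given $x_0$ together with being Markov given $x_N$ forces the full reciprocal (two-boundary) conditional independence; in the Gaussian case this can be checked via conditional expectations using Lemma \ref{GaussianCML_Definition_Expectation}, or more cleanly via the inverse-covariance characterization below. Then, by Theorem \ref{CML_Characterization}, the sequence is reciprocal iff $C^{-1}$ has both the $CM_L$ form \eqref{CML} and the $CM_F$ form \eqref{CMF} simultaneously.

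Next I would intersect the two sparsity patterns. Write $C^{-1} = [M_{ij}]$ as a block matrix. The $CM_L$ form \eqref{CML} requires $M_{ij} = 0$ whenever $|i-j| \geq 2$ and $j \neq N$ (and $i \neq N$), i.e. the only off-tridiagonal blocks allowed are in the last block-row/column (the $D_i$ entries). Symmetrically, the $CM_F$ form \eqref{CMF} allows off-tridiagonal blocks only in the first block-row/column. A block that is off-tridiagonal and lies in neither the last row/column nor the first row/column must therefore be zero under both constraints. The surviving off-tridiagonal blocks are exactly those that are simultaneously in the last and in the first band, namely the corner block $M_{0,N}$. Hence $C^{-1}$ is tridiagonal except possibly for the $(0,N)$ and $(N,0)$ corner blocks — this is precisely the cyclic tridiagonal form, equivalently \eqref{CML} with $D_1 = \cdots = D_{N-2} = 0$ (retaining only $D_0 = M_{0,N}$, relabeled as the single nonzero corner block $B_{N-1}$-adjacent term). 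This matches the statement of the theorem.

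The main obstacle I anticipate is the converse in the first step — showing that $CM_F$ and $CM_L$ together imply full reciprocity rather than just the two one-sided Markov properties. One must rule out that conditioning on an interior pair $x_{k_1}, x_{k_2}$ with $0 < k_1 < k_2 < N$ could fail to decouple inside from outside even when both endpoint conditionings succeed. I would sidestep the direct probabilistic argument by working entirely at the level of $C^{-1}$: since reciprocity for NG sequences is itself characterized by cyclic tridiagonality (which I can take as the target pattern), and since that pattern is exactly the intersection of the $CM_L$ and $CM_F$ patterns, the equivalence "reciprocal $\Leftrightarrow$ $CM_F$ and $CM_L$" follows from Theorem \ref{CML_Characterization} together with the standard correspondence between zero blocks of $C^{-1}$ and conditional independence of Gaussian variables \cite{Graphical}. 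In fact, invoking that correspondence directly — a zero block $M_{ij}=0$ encodes conditional independence of $x_i$ and $x_j$ given all other states — gives the cleanest route: reciprocity is equivalent to the Markov-random-field property on the cycle graph, whose edge set is exactly the tridiagonal band plus the $(0,N)$ corner, so $C^{-1}$ is cyclic tridiagonal iff the sequence is reciprocal.
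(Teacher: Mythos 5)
The paper itself gives no proof of Theorem~\ref{Reciprocal_Char}: it is quoted from \cite{Levy_Dynamic}, where Levy, Frezza, and Krener establish it via their theory of reciprocal dynamic models. So your attempt has to be judged on its own. Its skeleton is attractive and half of it works: reciprocal $\Rightarrow$ ($CM_F$ and $CM_L$) is immediate from the definitions (take $k_1=0$, resp.\ $k_2=N$), Theorem~\ref{CML_Characterization} then forces $C^{-1}$ to have both forms \eqref{CML} and \eqref{CMF}, and your pattern-intersection computation is correct --- an off-tridiagonal block surviving both constraints must lie simultaneously in the last and the first block row/column, leaving only the $(0,N)$ corner. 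That proves the forward direction (reciprocal $\Rightarrow$ cyclic tridiagonal $C^{-1}$) cleanly and differently from the cited source.

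The gap is in the converse (cyclic tridiagonal $\Rightarrow$ reciprocal, equivalently $CM_F\wedge CM_L \Rightarrow$ reciprocal), which is exactly the direction you flag as the ``main obstacle,'' and your two attempts at it do not close it. The first is circular: you justify ``reciprocal $\Leftrightarrow$ $CM_F$ and $CM_L$'' by invoking the fact that ``reciprocity for NG sequences is itself characterized by cyclic tridiagonality'' --- but that is the theorem being proved. The second (the cycle-graph MRF argument) is the right idea but is asserted rather than proved at its only nontrivial point: the correspondence from \cite{Graphical} gives \emph{pairwise} statements only ($M_{ij}=0$ iff $x_i \perp x_j$ given all other states), whereas reciprocity is a family of \emph{global} statements (the entire inside independent of the entire outside given just the two boundary states $x_{k_1},x_{k_2}$). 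To bridge them you need three steps: (a) reciprocal $\Rightarrow$ pairwise Markov on the cycle, via the separator $\lbrace i-1,i+1\rbrace$ (or $\lbrace 1,N\rbrace$ when $i=0$) and weak union; (b) pairwise $\Rightarrow$ global Markov with respect to the cycle graph, which is \emph{not} automatic and holds here only because the NG density is strictly positive (the intersection property; Pearl--Paz, Lauritzen \cite{Graphical}); and (c) global Markov $\Rightarrow$ reciprocal, since $\lbrace k_1,k_2\rbrace$ separates inside from outside on the cycle. Step (b) is the missing ingredient that carries the whole weight of the hard direction; as written, your proposal never states it, so the converse is unproven. With (a)--(c) supplied, your argument becomes a complete and genuinely more conceptual alternative to the dynamic-model proof of \cite{Levy_Dynamic}.
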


A characterization of the NG Markov sequence is as follows \cite{Ackner}.

\begin{theorem}\label{Markov_Characterization}
A NG sequence with covariance matrix $C$ is Markov iff $C^{-1}$ is tri-diagonal (i.e., $\eqref{CML}$ with $D_0=\cdots=D_{N-2}=0$).

\end{theorem}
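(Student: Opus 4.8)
The plan is to prove Theorem \ref{Markov_Characterization} by specializing the already-established $CM_c$ characterization (Theorem \ref{CML_Characterization}) to the Markov case. The key observation is that a sequence is Markov if and only if it is \emph{both} $CM_F$ and $CM_L$: the Markov property says that conditioned on the state at any time $j$, the past and future are independent, whereas $CM_F$ ($CM_L$) only requires this after additionally conditioning on $x_0$ ($x_N$). A Markov sequence clearly satisfies both $CM_F$ and $CM_L$, since conditioning on $x_0$ or $x_N$ is redundant once we condition on the intermediate state. Conversely, I would argue that the conjunction of the two conditional-independence structures forces the full Markov property. Thus the plan reduces to intersecting the two characterizing matrix forms.

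Concretely, I would proceed as follows. First, invoke Theorem \ref{CML_Characterization} for $c=0$ and $c=N$ separately: the sequence is $CM_L$ iff $C^{-1}$ has the form \eqref{CML}, and $CM_F$ iff $C^{-1}$ has the form \eqref{CMF}. Second, show that being Markov is equivalent to being simultaneously $CM_L$ and $CM_F$ (using Lemma \ref{GaussianCML_Definition_Expectation} or Corollary \ref{CDF} to phrase everything in terms of conditional expectations/CDFs). Third, compute the intersection of forms \eqref{CML} and \eqref{CMF}: a single fixed matrix $C^{-1}$ that is simultaneously of both forms must have all the ``long-range'' blocks vanish. In \eqref{CML} the entries $D_0,\ldots,D_{N-2}$ sit in the last column/row (coupling each time to $x_N$), while in \eqref{CMF} the entries $D_2,\ldots,D_N$ sit in the first column/row (coupling each time to $x_0$). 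For a single matrix to match both patterns, every off-tridiagonal block must be zero, leaving exactly the tridiagonal form, i.e.\ \eqref{CML} with $D_0=\cdots=D_{N-2}=0$. Reading off this tridiagonal structure gives the claim.

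The main obstacle I anticipate is the equivalence ``Markov $\iff$ $CM_F$ and $CM_L$,'' and in particular the converse direction. The forward direction is immediate, but to show that a sequence which is both $CM_F$ and $CM_L$ is genuinely Markov, I would need to establish $E[x_k\mid [x_i]_0^{j}] = E[x_k\mid x_j]$ for all $j<k$, \emph{without} any conditioning on endpoints. The natural route is the matrix intersection argument above rather than a direct probabilistic argument: once $C^{-1}$ is shown to be tridiagonal, the Markov property follows from Theorem \ref{Markov_Characterization}'s own statement only circularly, so instead I would independently verify that a tridiagonal $C^{-1}$ yields $E[x_k\mid [x_i]_0^{j}]=E[x_k\mid x_j]$ by the standard Gaussian fact that zeros in the inverse covariance encode conditional independence \cite{Graphical}. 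A subtle point to handle carefully is the boundary behavior at $k=0$ and $k=N$, where the $CM_F$ and $CM_L$ forms treat the endpoints asymmetrically; I would check that the intersection genuinely forces $D_0$ and $D_N$ (the endpoint-to-endpoint couplings) to vanish as well, so that no residual long-range dependence survives.
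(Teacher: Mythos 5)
Your proposal breaks down at its central claim: a sequence that is both $CM_F$ and $CM_L$ need \emph{not} be Markov. The intersection of those two classes is the \emph{reciprocal} class, not the Markov class --- this is precisely the distinction the paper is organized around (``Markov sequences are reciprocal, and reciprocal sequences are $CM_c$''). You can see the failure in your own step three: form \eqref{CML} permits nonzero off-tridiagonal blocks $D_0,\ldots,D_{N-2}$ in the last block column, and form \eqref{CMF} permits nonzero blocks $D_2,\ldots,D_N$ in the first block row, but these two sets of positions are not disjoint --- they share the corner position $(0,N)$, which is $D_0$ in \eqref{CML} and $D_N$ in \eqref{CMF}. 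Hence a matrix having both forms is cyclic tridiagonal (tridiagonal plus a corner block), which by Theorem \ref{Reciprocal_Char} characterizes reciprocal, not Markov, sequences. The ``subtle point'' you flagged at the end --- checking that the intersection forces the endpoint-to-endpoint coupling to vanish --- is exactly where the argument dies: the intersection does not force this. A concrete counterexample to your claimed equivalence: take $N=2$ and a zero-mean nonsingular Gaussian $(x_0,x_1,x_2)$ whose inverse covariance has a nonzero $(0,2)$ block. Both $CM_F$ and $CM_L$ hold vacuously (conditioning on either endpoint leaves a two-element sequence, which is trivially Markov), yet the sequence is not Markov because $x_0$ and $x_2$ are not conditionally independent given $x_1$.

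To repair the necessity direction you must use the Markov property beyond what $CM_F$ and $CM_L$ encode: for instance, Markovity also gives $x_0 \perp x_N \mid (x_1,\ldots,x_{N-1})$, and for a nonsingular Gaussian this is equivalent to $(C^{-1})_{0,N}=0$ \cite{Graphical}; combining that with the cyclic tridiagonal form your intersection does yield would then give tridiagonality. Your sufficiency fallback (tridiagonal $C^{-1}$ implies Markov via the zero-pattern/conditional-independence dictionary) is sound, though it makes no use of the CM machinery at all. For comparison with the source: the paper offers no proof of this theorem --- it quotes it from the Ackner--Kailath reference \cite{Ackner} and notes only that such characterizations can be verified via \cite{Graphical} or via the dynamic models --- so a correct self-contained argument would be a genuine addition; yours, as written, is not yet one.
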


Markov sequences are reciprocal, and reciprocal sequences are $CM_c$ \cite{CM_Part_II_A_Conf}.

\section{Conclusions and Applications}\label{Section_Summary}

Conditioning is a very powerful tool in probability theory. The Markov property, defined based on conditioning, is very important and widely used in application. The conditionally Markov (CM) process, which combines conditioning and the Markov property, is a general class of stochastic processes, including the Markov process as a special case. Different ways of combination lead to different classes of CM processes. Therefore, a systematic approach and a wide variety of choices are provided for modeling random phenomena/problems. 

We have elaborated general definitions of CM sequences, studied and characterized nonsingular Gaussian (NG) CM sequences, and obtained their dynamic models. The $CM_c$ sequence is an important class of CM sequences. Markov and reciprocal sequences are special $CM_c$ sequences. That is why characterizations of NG Markov and reciprocal sequences are special cases of those of $CM_c$ sequences. Therefore, the results of this paper build a foundation for studying reciprocal sequences from the CM viewpoint. This viewpoint leads to simple and desirable results for reciprocal sequences. For example, the existing model for NG reciprocal sequences \cite{Levy_Dynamic} is driven by \textit{colored} noise. However, it is possible to obtain \textit{reciprocal} $CM_c$ models driven by \textit{white} noise governing NG reciprocal sequences \cite{CM_Part_II_A_Conf}, \cite{CM_Explicitly}--\cite{CM_Journal_Algebraically}. Due to whiteness of the dynamic noise, these models are simple. In addition, viewing the reciprocal sequence as a special CM sequence gives more insight into the reciprocal sequence and reveals new properties of it.  

The main components of motion trajectories without destination are: an origin and an evolution law. Markov sequences can be used for modeling such trajectories based on their initial density and evolution law. However, Markov sequences are not flexible enough for modeling the main components of motion trajectories with destination information (i.e., an origin, evolution, and a destination). This is because the density of a Markov sequence at the destination is determined by its initial density and the evolution law. The $CM_L$ sequence, as a more general class of stochastic sequences, can model trajectories with destination information. The main components of such trajectories can be seen in the $CM_L$ dynamic model. $x_N$ models the state of the destination. Conditioned on $x_N$, the evolution law is Markov, which is simple and desired for application. Also, $x_0$ models the state at the origin. In addition, the $CM_L$ sequence can have any relationship between the states at the origin and at the destination. Moreover, due to whiteness of the dynamic noise, estimation of a sequence governed by a $CM_L$ model is straighforward. This is particularly useful for trajectory prediction, which is a critical task in air traffic control. In \cite{DD_Conf}, a $CM_L$ model was used for trajectory modeling with destination information. Also, a CM sequence was proposed in \cite{DW_Conf} for trajectory modeling with waypoint information.

Singular/nonsingular Gaussian CM and reciprocal sequences and their application were studied in \cite{CM_SingularNonsingular}--\cite{Thesis_Reza}.

\subsubsection*{Acknowledgments}

Research was supported by NASA Phase03-06 through grant NNX13AD29A.

\end{document}